\def\Z{{\mathbb Z}}
\def\Qq{{\mathbb Q}}
\def\R{{\mathbb R}}
\def\C{{\mathbb C}}
\def\H{{\mathfrak H}}
\def\sq{\hbox{\rlap{$\sqcap$}$\sqcup$}}
\def\qed{\ifmmode\sq\else{\unskip\nobreak\hfil
         \penalty50\hskip1em\null\nobreak\hfil\sq
         \parfillskip=0pt\finalhyphendemerits=0\endgraf}\fi}
\def\smat#1#2#3#4{\left(\begin{smallmatrix}#1&#2\\#3&#4\end{smallmatrix}\right)}
\newtheorem{theorem}{Theorem}
\newtheorem{lemma}[theorem]{Lemma}
\newtheorem{cor}[theorem]{Corollary}
\numberwithin{theorem}{section}
\numberwithin{equation}{section}
\title[On the spinor $L$-function of Miyawaki-Ikeda lifts]{On the spinor $L$-function of Miyawaki-Ikeda lifts}
\author[S.~Hayashida]{Shuichi Hayashida
}
\date{\today}
\keywords{Siegel modular form, spinor L-function, symmetric power $L$-function,
  Miyawaki-Ikeda lift, Maass relation}
\subjclass[2010]{(primary) 11F46,  (secondary) 11F66}
\begin{document}

\begin{abstract}
We consider lifts from two elliptic modular forms to Siegel modular forms
of odd degrees which are special cases of Miyawaki-Ikeda lifts.
Assuming non-vanishing of these Miyawaki-Ikeda lifts, we show that
the spinor $L$-functions of these Miyawaki-Ikeda lifts
are products of some kind of symmetric power $L$-functions determined by original two
elliptic modular forms. 
\end{abstract}

\maketitle

\section{Introduction}\label{sec:intro}

\subsection{}
In \cite{Mi} Miyawaki conjectured the existence of lifts
from pairs of two elliptic modular forms of level one
to Siegel modular forms of degree three.
This Miyawaki's conjecture was partially solved by Ikeda~\cite{Ik2}.
Furthermore, Ikeda constructed certain lifts from pairs of an elliptic modular form
and a Siegel modular form to Siegel modular forms in general degrees.
In this paper we call these lifts \textit{Miyawaki-Ikeda lifts}.



In \cite{He} Heim showed that the \textit{spinor $L$-function} of the Siegel cusp form
of weight $12$ of degree $3$, which is a Miyawaki-Ikeda lift, is a product of certain $L$-functions
obtained from two elliptic cusp forms.
This identity of the spinor $L$-function was originally conjectured by Miyawaki~\cite{Mi}.
%
The purpose of the present paper is to generalize the result
about the spinor $L$-function in~\cite{He}
for Miyawaki-Ikeda lifts in arbitrary weights and in arbitrary
odd degrees.

We remark that the identity for the \textit{standard $L$-function} of Miyawaki-Ikeda lifts
have already been given in~\cite{Ik2}.
For the case of Miyawaki-Ikeda lifts of two elliptic modular forms,
in~\cite{MaassRe} we obtain another proof of this identity of the standard $L$-function
by using generalized Maass relations.

\subsection{}
We explain our results more precisely.
Let $f \in S_{2k}(\Gamma_1)$ and $g \in S_{k+n}(\Gamma_1)$ be elliptic modular forms
of weight $2k$ and of weight $k+n \in 2\Z$, respectively.
We assume that both forms $f$ and $g$ are normalized Hecke eigenforms.
Let $\lambda_f(p)$ and $\lambda_g(p)$ be the $p$-th Fourier coefficients
of $f$ and $g$, respectively.
We denote by $\{\alpha_p^{\pm}\}$ and $\{\beta_p^{\pm}\}$ the sets of complex numbers
determined through the identities:
\begin{eqnarray*}
 1 - \lambda_f(p)\, p^{-s} + p^{2k-1-2s}
 &=&
 \left( 1 - \alpha_p\, p^{k-\frac12-s} \right)
 \left( 1 - \alpha_p\,^{-1} p^{k-\frac12-s} \right), \\
 1 - \lambda_g(p)\, p^{-s} + p^{k+n-1-2s}
 &=&
 \left( 1 - \beta_p\, p^{\frac{k+n-1}{2} - s} \right)
 \left( 1 - \beta_p^{-1}\, p^{\frac{k+n-1}{2} - s} \right).
\end{eqnarray*}
For any integer $m > 1$ we set
\begin{eqnarray*}
 A_{p,m-1} := \mbox{diag}(\alpha_p^{m-1},\alpha_p^{m-3},...,\alpha_p^{-m+1})
 \mbox{ and }
 B_p := \smat{\beta_p}{}{}{\beta_p^{-1}},
\end{eqnarray*}
and define
\begin{eqnarray*}
  \qquad \quad
  L(s,g \otimes \mbox{sym}_{m-1}\, f) := 
  \prod_p \det(1_{2m} - A_{p,m-1} \otimes B_p \cdot p^{(m-1)(k-\frac12) + \frac{k+n-1}{2} - s})^{-1}
\end{eqnarray*}
for $\mbox{Re}(s) > (m-1)(k-\frac12) + \frac{k+n-1}{2} + 1$.
We put $L(s,g \otimes \mbox{sym}_0\, f) := L(s,g)$, which is the usual Hecke $L$-function of $g$.

Let $\mathcal{F}_{f,g} \in S_{k+n}(\Gamma_{2n-1})$ be the Siegel modular form
of weight $k+n$ of degree $2n-1$ which is the Miyawaki-Ikeda lift of $(f,g)$.
(see Section~\ref{sec:spinor_Miyawaki} for the definition of $\mathcal{F}_{f,g}$.)
Let $T_{2n-1}(p)$ be the Hecke operator which will be denoted in the next section.
Then we obtain the main theorem.

\begin{theorem}\label{thm:main}
We assume $\mathcal{F}_{f,g} \not \equiv 0$.
Then $\mathcal{F}_{f,g}$ is an eigenform for Hecke operators $T_{2n-1}(p)$
for any prime $p$.
Furthermore, the spinor $L$-function of $\mathcal{F}_{f,g}$ satisfies
\begin{eqnarray*}
 L(s,\mathcal{F}_{f,g},\mbox{spin}) 
 &=&
 L(s,g\otimes \mbox{sym}_{n-1}f  ) \\
 && \times 
 \prod_{m=1}^{n-1}
 \prod_{\begin{smallmatrix} r = -m(2n-m-2) \\ (step\ 2) \end{smallmatrix}}^{m(2n-m-2)}
 L(s-m(k-\frac12)+\frac{r}{2},g\otimes \mbox{sym}_{n-m-1}f  )^{\beta(r,m,n-1)}
\end{eqnarray*}
for $\mbox{Re}(s) > (n-\frac12)k + 1$,
where $\beta(r,m,n)$ is a non-negative integer
given in~\cite{Sch} (see also (\ref{id:beta}) in Section~\ref{sec:Ikeda}.)
\end{theorem}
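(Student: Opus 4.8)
The plan is to reduce the entire statement to an explicit determination of the unramified (Satake) parameters of $\mathcal{F}_{f,g}$ at each prime $p$, after which the spinor $L$-function is by definition the product over the $2^{2n-1}$ spin monomials in those parameters, and the theorem becomes an identity matching that product against the asserted symmetric-power factors. Throughout I will work prime by prime and read each Euler factor of $L(s,g\otimes\mbox{sym}_{m-1}f)$ through its roots $\alpha_p^{m-1-2a}\beta_p^{\pm1}$ scaled by the fixed power $p^{(m-1)(k-\frac12)+\frac{k+n-1}{2}}$, for $0\le a\le m-1$.

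First I would fix the Hecke data. Since $\mathcal{F}_{f,g}$ is the Petersson pullback against $g$ of the degree-$2n$ Ikeda lift $F$ of $f$, and $F$ is a Hecke eigenform whose parameters are the explicit family $\alpha_p^{\pm1}p^{\pm(2i-1)/2}$ with $1\le i\le n$, I would use the non-vanishing hypothesis together with the generalized Maass relations of \cite{MaassRe}, which control both the Fourier coefficients of the lift and the action of $T_{2n-1}(p)$, to prove that $\mathcal{F}_{f,g}$ is itself a simultaneous eigenform and to pin down its $2n-1$ spin parameters. These should turn out to be the $f$-directions $\alpha_p^{\pm1}p^{\pm(2i-1)/2}$ for $1\le i\le n-1$ inherited from $F$, together with one further direction built from $g$ and carrying $\beta_p$, the extreme slot $i=n$ of $F$ having been consumed by the pairing against $g$; equivalently, the standard parameters combine those of the symmetric square of $g$ with those of $f$ propagated through the Arthur $SL_2$ of dimension $2n-2$, recovering the standard $L$-identity of \cite{Ik2}. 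The delicate point here is that the standard $L$-function determines the parameters only up to the Weyl group and a residual sign on the leading parameter $\gamma_0$, so I must use the explicit holomorphic construction to fix that sign; this is exactly where $\mathcal{F}_{f,g}\not\equiv0$ is indispensable.

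With the parameters in hand, the spinor $L$-function is $\prod_{S\subseteq\{1,\dots,2n-1\}}(1-\gamma_0\prod_{i\in S}\gamma_i\,p^{-s})^{-1}$, the spin Euler factor of the relevant dual group evaluated on the Satake class. The next step is to expand this product and collect the $2^{2n-1}$ monomials according to the grading furnished by the Arthur $SL_2$: every such monomial equals $\alpha_p^{e}\beta_p^{\pm1}\,p^{(n-1)(k-\frac12)+\frac{k+n-1}{2}-\frac{r}{2}}$ for suitable $e$ and $r$, and the monomials with a fixed symmetric-power degree $n-m-1$ in $\alpha_p$ and fixed Arthur-weight $r$ assemble into one Euler factor of $L(s-m(k-\frac12)+\frac{r}{2},g\otimes\mbox{sym}_{n-m-1}f)$. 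The multiplicity with which each such factor occurs is precisely Schmidt's integer $\beta(r,m,n-1)$ of \cite{Sch}, while the undecorated top layer $m=0$ produces the single leading factor $L(s,g\otimes\mbox{sym}_{n-1}f)$; tracking the $p$-power attached to each graded piece is what forces the shift $-m(k-\frac12)+\frac{r}{2}$.

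The hard part will be this last reorganization: one must match, layer by layer and with the correct $p$-shifts, the branching of the spin representation along the non-tempered (Arthur) parameter to the combinatorial multiplicities $\beta(r,m,n-1)$ tabulated in \cite{Sch}, and verify that the shifts and multiplicities assemble into exactly the stated double product and not some rearrangement of it. In practice I expect to prove the two sides agree as formal Euler products by comparing, for each monomial in $\alpha_p$, $\beta_p$, $p^{\pm1/2}$, its total multiplicity on the left (from the spin expansion) with its total multiplicity on the right (from the symmetric-power factors weighted by $\beta$), the needed combinatorial identity for the $\beta(r,m,n-1)$ being the technical crux of the argument.
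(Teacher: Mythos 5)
Your proposal follows essentially the same route as the paper: determine the Satake parameters of $\mathcal{F}_{f,g}$ at each $p$ (the $\mu_i$, $i\geq 1$, from Ikeda's standard $L$-function identity, and the leading parameter $\mu_0$ --- including its sign --- from the explicit $T_{2n-1}(p)$-eigenvalue obtained via the generalized Maass relation for the Fourier--Jacobi coefficients of $F_{2n}$), and then expand the spin Euler factor. The only real difference is that the ``hard part'' you anticipate, namely matching the $2^{2n-1}$ spin monomials to the multiplicities $\beta(r,m,n-1)$, is not redone in the paper: since $\{\mu_1,\dots,\mu_{2n-1}\}$ consists of the parameters of the degree-$2(n-1)$ Duke--Imamoglu--Ibukiyama--Ikeda lift together with the single extra slot $\beta_p^2$, the combinatorics is literally the Schmidt--Murakawa identity already quoted in Section~\ref{sec:Ikeda}, with each factor $L(\cdot,\mbox{sym}_j\,f)$ replaced by $L(\cdot,g\otimes\mbox{sym}_j\,f)$.
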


In the case of $(n,k) = (2,10)$, namely in the case of $\mathcal{F}_{f,g} \in S_{12}(\Gamma_3)$,
this theorem has been shown by Heim \cite{He}.

We remark that Theorem~\ref{thm:main} has already been given in \cite{Mu}.
However, the proof in~\cite{Mu} is not available,
because a necessary property for the Fourier coefficients of
the Klingen-Eisenstein series is not known
(see also the Final remarks in \cite{He}.)
The proof in the present paper for Theorem~\ref{thm:main}
depends on a generalization of Maass relation for Duke-Imamoglu-Ibukiyama-Ikeda lifts.


\vspace{5mm}

This paper is organized as follows:
In Section~\ref{sec:notation} we will give notation and in Section~\ref{sec:Ikeda}, we will review the spinor $L$-function
of the Duke-Imamoglu-Ibukiyama-Ikeda lifts.
In Section~\ref{sec:spinor_Miyawaki} we will review the Miyawaki-Ikeda lifts and we shall give the proof of Theorem~\ref{thm:main}.

\vspace{5mm}

Acknowledgement:
to be entered later.

\section{Notation}  \label{sec:notation}

\subsection{Symbols}
The symbol $\mbox{diag}(a_1,...,a_n)$ denotes the diagonal matrix with diagonal elements $a_1$, ..., $a_n$.
The symbol $M_{n,m}(R)$ is the set of all matrices of size $n$ times $m$ with entries in a commutative ring $R$.
We put $M_{n}(R) := M_{n,n}(R)$.
For a natural number $n$ we set $ <n> := \frac{n(n+1)}{2}$.
The letter $p$ is reserved for prime numbers.

Let $\Gamma_n := \mbox{Sp}_n(\Z)$ be the symplectic group with matrix of size $2n$ with entries in $\Z$.
For the field $K = \Qq$ or $\R$, we define
\begin{eqnarray*}
 \mbox{GSp}_n^+(K)
 &:=&
 \left\{
   M \in GL_{2n}(K) \, ; \, M J_{2n} {^t M} = \nu(M) J_{2n}, \, \nu(M) > 0
 \right\},
\end{eqnarray*}
where $J_{2n} := \smat{0}{1_n}{-1_n}{0} $.
The above number $\nu(M)$ is called the similitude of $M \in \mbox{GSp}_n^+(K)$.

The symbol $\H_n$ denotes the Siegel upper half space of size $n$.

We denote by $S_{k}(\Gamma_{n})$ the space of Siegel cusp forms of weight $k$
of degree $n$ and denote by
$J_{k,m}(\Gamma_{n})$
(resp. $J_{k,m}^{cusp}(\Gamma_{n})$) the space of Jacobi forms (resp. Jacobi cusp forms)
of weight $k$
of index $m$ of degree $n$ (cf. \cite{Zi} for the definition of Jacobi forms in higher degrees.)
We remark that any element in $J_{k,m}(\Gamma_{n})$ is a holomorphic
function on $\H_n \times \C^n$.


\subsection{$L$-functions of Siegel modular forms}
For any function $F$ on $\mathfrak{H}_{n}$ and
for $M = \smat{A}{B}{C}{D} \in  \mbox{GSp}_{n}^+(\R)$ we define the slash-operator $|_k$ by
\begin{eqnarray*}
  (F|_{k} M)(\tau) := \det(C\tau + D)^{-k} F(M\cdot \tau). 
\end{eqnarray*}
Here $M \cdot \tau := (A\tau + B)(C \tau + D)^{-1}$ is the linear fractional transformation.

For any Siegel modular form $F$ of weight $k$ with respect to $\Gamma_n$
and for any positive integer $N$,
we define
\begin{eqnarray*}
 F| T_{n}(N)
 &:=& 
 N^{nk-<n>} \sum_{M \in \Gamma_{n} \backslash S_{n}(N)}
              F|_{k} M.
\end{eqnarray*}
Here we defined
\begin{eqnarray*}
 S_{n}(N)
 &:=&
 \left\{
  M \in \mbox{GSp}_{n}^{+}(\Qq) \cap M_{2n}(\Z) \, | \, \nu(M) = N
 \right\}.
\end{eqnarray*}

We denote by
$\mathcal{H}(\Gamma_{n}, \mbox{GSp}_{n}^+(\Qq))$
(resp. $\mathcal{H}(\Gamma_{n},\mbox{Sp}_{n}(\Qq))$)
the Hecke ring with respect to the Hecke pair
$(\Gamma_{n}, \mbox{GSp}_{n}^+(\Qq))$
(resp. $(\Gamma_{n},\mbox{Sp}_{n}(\Qq))$.)

We set $\Delta_{n,p} := \mbox{GSp}_{n}^+(\Qq)\cap \mbox{GL}_{2n}(\Z[\frac{1}{p}])$.
Then
it is known that the Hecke ring $\mathcal{H}(\Gamma_{n}, \mbox{GSp}_{n}^+(\Qq))$
has the local decomposition
\begin{eqnarray*}
\mathcal{H}(\Gamma_{n}, \mbox{GSp}_{n}^+(\Qq)) \cong
  \bigotimes_p
   \mathcal{H}(\Gamma_{n}, \Delta_{n,p}).
\end{eqnarray*}
There exists the isomorphism
$
\mathcal{H}(\Gamma_{n}, \Delta_{n,p})
\cong
\C[x_0^{\pm}, x_1^{\pm}, ..., x_{n}^{\pm}]^{W_n}
$
which is called the Satake isomorphism,
where $\C[x_0^{\pm}, x_1^{\pm}, ..., x_{n}^{\pm}]^{W_n}$
is the subring of all $W_n$-invariant elements in the polynomial ring
$\C[x_0^{\pm}, x_1^{\pm}, ..., x_{n}^{\pm}]$.
Here $W_n$ is the Weyl group of symplectic group.
The action of $W_n$ on $\C[x_0^{\pm}, x_1^{\pm}, ..., x_{n}^{\pm}]$ is generated by the maps
\begin{eqnarray*}
 \sigma_i \ : \ x_0 \mapsto x_0 x_i,\ x_i \mapsto x_i^{-1},\ x_j \mapsto x_j \ (j \neq i)
\end{eqnarray*}
for $i=1,...,n$, and by all permutations of $\{x_1,...,x_{n} \}$.

%

Let $F \in S_{k}(\Gamma_{n})$ be a common eigenform for all Hecke operators for
$\mathcal{H}(\Gamma_{n}, \mbox{GSp}_{n}^+(\Qq))$.
Then there exists a set of complex numbers $\{\mu_0,\mu_1,...,\mu_{n} \} := \{\mu_{0,p},\mu_{1,p},...,\mu_{n,p} \}$
determined by the eigenvalues of $F$ and by the Satake isomorphism.
Such a set of complex numbers 
$\{\mu_0,\mu_1,...,\mu_{n} \}$
is called the Satake parameters and
is uniquely determined by $F$ and by prime $p$ up to the action of
the Weyl group $W_n$.

The spinor $L$-function $L(s,F,\mbox{spin})$
and the standard $L$-function $L(s,F,\mbox{st})$ are defined by
\begin{eqnarray*}
  L(s,F,\mbox{spin})
  &:=&
  \prod_{p} \left\{ (1-\mu_0 p^{-s}) \prod_{j=1}^{n}\prod_{1\leq i_1 <
   \cdots < i_j \leq n}(1-\mu_0
 \mu_{i_1} \cdots \mu_{i_j} p^{-s}) \right\}^{-1}
\end{eqnarray*}
and by
\begin{eqnarray*}
 L(s,F,\mbox{st})
 &:=&
  \prod_{p} \left\{ (1-p^{-s}) \prod_{i=1}^{n}   \left( 1 - \mu_i p^{-s} \right)
                                                          \left( 1 - \mu_i^{-1} p^{-s} \right) \right\}^{-1},
\end{eqnarray*}
where $\{\mu_0,\mu_1,...,\mu_{n}\}$ is the Satake parameters of $F$.
Here
\begin{eqnarray*}
\mu_0^2 \mu_1 \cdots \mu_{n} = p^{n k - <n>}.
\end{eqnarray*}

To determine the standard $L$-function of $F$, the function $F$
is only required to be a common eigenform for all Hecke operators
for $\mathcal{H}(\Gamma_{n},\mbox{Sp}_{n}(\Qq))$.
If $F$ is a common eigenform for all Hecke operators
for $\mathcal{H}(\Gamma_{n},\mbox{Sp}_{n}(\Qq))$
and also for all Hecke operators $T_{n}(p)$ for any prime $p$,
then $F$ is a common eigenform for all Hecke operators
for $\mathcal{H}(\Gamma_{n},\mbox{GSp}_{n}^+(\Qq))$,
and the spinor $L$-function of $F$ is determined.

\section{The spinor $L$-function of Duke-Imamoglu-Ibukiyama-Ikeda lifts}\label{sec:Ikeda}
In this section we review the identity about the spinor $L$-function of Duke-Imamoglu-Ibukiyama-Ikeda lifts.
This identity has been given by Schmidt \cite{Sch} and Murakawa \cite{Mu}, independently.

Let $f \in S_{2k}(\Gamma_1)$ be a normalized Hecke eigenform.
We put $F_{2n} \in S_{k+n}(\Gamma_{2n})$ the Duke-Imamoglu-Ibukiyama-Ikeda lift of $f$.
We remark that this lift was conjectured by Duke and Imamoglu and also by Ibukiyama,
independently, and shown by Ikeda~\cite{Ik}.
In~\cite{Ik} Ikeda proved that $F_{2n}$ is a common eigenform for all Hecke operators for
$\mathcal{H}(\Gamma_{2n},\mbox{Sp}_{2n}(\Qq))$
with the standard $L$-function
\begin{eqnarray*}
 L(s,F_{2n},st)
 &=&
 \zeta(s)
 \prod_{i=1}^{2n} L(s+k+n-i,f).
\end{eqnarray*}

Murakawa~\cite{Mu} and Schmidt~\cite{Sch} independently obtained the following lemma.

\begin{lemma}[\cite{Mu,Sch}]\label{lem:spin_ms}
The Duke-Imamoglu-Ibukiyama-Ikeda lift $F_{2n}$ is a common eigenform for all Hecke operators for
$\mathcal{H}(\Gamma_{2n}, \mbox{GSp}_{2n}^+(\Qq))$.
Furthermore, the Satake parameters $\{\mu_0, \mu_1 , ... , \mu_{2n} \}$ of $F_{2n}$ is determined by
$\mu_0 = \alpha_p^{-n} p^{n(k-\frac12)}$ and 
\begin{eqnarray*}
\{\mu_1,...,\mu_{2n}\} = \{\alpha_p p^{-n+1/2},\alpha_p p^{-n+3/2},...,\alpha_p p^{n-1/2} \}
\end{eqnarray*}
up to the action of the Weyl group.
\end{lemma}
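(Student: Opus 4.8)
The plan is to separate the statement into two parts resting on different inputs. The unordered set $\{\mu_1,\dots,\mu_{2n}\}$ is controlled entirely by the standard $L$-function, which Ikeda has already determined, whereas both the passage from an eigenform for $\mathcal{H}(\Gamma_{2n},\mbox{Sp}_{2n}(\Qq))$ to an eigenform for $\mathcal{H}(\Gamma_{2n},\mbox{GSp}_{2n}^+(\Qq))$ and the value of $\mu_0$ are governed by the similitude operator $T_{2n}(p)$, exactly as in the remark closing Section~\ref{sec:notation}.

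First I would read off $\{\mu_1,\dots,\mu_{2n}\}$ by comparing Euler factors. By definition the $p$-factor of $L(s,F_{2n},\mbox{st})$ is $(1-p^{-s})^{-1}\prod_{i=1}^{2n}(1-\mu_i p^{-s})^{-1}(1-\mu_i^{-1}p^{-s})^{-1}$, while Ikeda's identity $L(s,F_{2n},\mbox{st})=\zeta(s)\prod_{i=1}^{2n}L(s+k+n-i,f)$ rewrites this same factor, using the local roots $\alpha_p^{\pm1}p^{k-\frac12}$ of $L(s,f)$, as $(1-p^{-s})^{-1}\prod_{i=1}^{2n}(1-\alpha_p p^{i-n-\frac12}p^{-s})^{-1}(1-\alpha_p^{-1}p^{i-n-\frac12}p^{-s})^{-1}$. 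The exponents $\{\,i-n-\frac12 : 1\le i\le 2n\,\}=\{-n+\frac12,\dots,n-\frac12\}$ are symmetric about $0$, so the two multisets of roots coincide precisely when $\{\mu_1,\dots,\mu_{2n}\}=\{\alpha_p p^{\,i-n-\frac12}:1\le i\le 2n\}$ up to the Weyl-group action, which is the asserted value.

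For $\mu_0$ I would first invoke the relation $\mu_0^2\mu_1\cdots\mu_{2n}=p^{2nk-n}$, the genus-$2n$, weight-$(k+n)$ case of the identity in Section~\ref{sec:notation}. Since the exponents above sum to $0$ one has $\mu_1\cdots\mu_{2n}=\alpha_p^{2n}$, hence $\mu_0=\pm\,\alpha_p^{-n}p^{\,n(k-\frac12)}$; it remains to prove that $F_{2n}$ is a $T_{2n}(p)$-eigenform and to fix this sign. Because $T_{2n}(p)$ lies in the commutative ring $\mathcal{H}(\Gamma_{2n},\mbox{GSp}_{2n}^+(\Qq))$ and commutes with $\mathcal{H}(\Gamma_{2n},\mbox{Sp}_{2n}(\Qq))$, it preserves the joint $\mbox{Sp}_{2n}(\Qq)$-Hecke eigenspace $V$ of $F_{2n}$ furnished by Ikeda; granting that $V$ is one-dimensional, $F_{2n}$ is automatically an eigenform of $T_{2n}(p)$, with eigenvalue $\lambda(p)$ equal, in the normalization of Section~\ref{sec:notation}, to the Satake image $\mu_0\prod_{i=1}^{2n}(1+\mu_i)$ of $T_{2n}(p)$. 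Computing the action of $T_{2n}(p)$ on a single nonzero Fourier coefficient of $F_{2n}$ then pins down the sign of $\lambda(p)$, forcing the plus sign in $\mu_0=\alpha_p^{-n}p^{\,n(k-\frac12)}$.

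The hard part will be the eigenform property in the third step: Ikeda's work supplies eigenvalues only for $\mathcal{H}(\Gamma_{2n},\mbox{Sp}_{2n}(\Qq))$, so one must control $T_{2n}(p)$ by independent means, and I expect this to be the principal obstacle. I see two routes. One may establish the one-dimensionality of $V$ (multiplicity one for the lift) and then reduce the computation of $\lambda(p)$ to one Fourier coefficient through the explicit action of the double coset $\Gamma_{2n}\,\mbox{diag}(1_{2n},p\,1_{2n})\,\Gamma_{2n}$; alternatively, following Schmidt, one identifies the unramified local representation generated by $F_{2n}$ with the constituent attached to the Arthur parameter built from $f$ and the $2n$-dimensional representation of the Arthur $\mathrm{SL}_2$, which produces the single Satake class and the value of $\mu_0$ simultaneously. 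In either approach, showing that the $T_{2n}(p)$-action is genuinely scalar on $V$, rather than merely preserving it, is the step that carries the real weight.
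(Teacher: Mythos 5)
First, note that the paper does not prove this lemma at all: it is quoted verbatim from Murakawa \cite{Mu} and Schmidt \cite{Sch}, so there is no internal proof to compare against. Judged on its own terms, your proposal correctly handles the easy half. Reading off $\{\mu_1,\dots,\mu_{2n}\}=\{\alpha_p p^{i-n-\frac12}\}$ by matching the Euler factors of Ikeda's identity $L(s,F_{2n},\mbox{st})=\zeta(s)\prod_{i=1}^{2n}L(s+k+n-i,f)$ against the definition of the standard $L$-function is exactly right, and the product relation $\mu_0^2\mu_1\cdots\mu_{2n}=p^{2n(k+n)-<2n>}=p^{2nk-n}$ does give $\mu_0=\pm\alpha_p^{-n}p^{n(k-\frac12)}$ \emph{once the Satake parameter $\mu_0$ is known to exist}.

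That existence is the entire content of the lemma, and it is where your argument has a genuine gap. Your first route hinges on ``granting that $V$ is one-dimensional,'' i.e.\ that no other Hecke eigenform in $S_{k+n}(\Gamma_{2n})$ shares the $\mathcal{H}(\Gamma_{2n},\mbox{Sp}_{2n}(\Qq))$-eigensystem of $F_{2n}$. This multiplicity-one statement is not known for Siegel cusp forms of degree $2n$ and is not something you can simply grant: commutativity of the Hecke algebra only gives that $T_{2n}(p)$ preserves $V$, which produces \emph{some} $T_{2n}(p)$-eigenvector in $V$, not that $F_{2n}$ itself is one. Your second route (Arthur parameters) is a gesture toward Schmidt's method but is not carried out, and in any case identifying the local representation generated by $F_{2n}$ again requires input from the construction of the lift, not merely from its standard $L$-function. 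The actual proofs establish the $T_{2n}(p)$-eigenproperty and the precise value (hence sign) of $\mu_0$ by working directly with Ikeda's explicit formula for the Fourier coefficients of $F_{2n}$ (Murakawa) or by determining the spherical local components of the associated representation (Schmidt); some such use of the explicit construction of $F_{2n}$ is unavoidable, and your proposal does not supply it. It is also worth noting that the present paper's own proof of Theorem \ref{thm:main} faces exactly the analogous difficulty for $\mathcal{F}_{f,g}$ and resolves it not by multiplicity one but by a direct computation of $\mathcal{F}_{f,g}|T_{2n-1}(p)$ via the generalized Maass relation (Lemma \ref{lem:Tp}); the analogue for $F_{2n}$ itself is what your proof is missing.
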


Let $\{\alpha_p^{\pm} \}$ and $A_{p,m}$ be the same symbols in Section~\ref{sec:intro}.
We set
the symmetric power $L$-function  of $f$ by
\begin{eqnarray*}
 L(s, \mbox{sym}_m\, f)
  &:=&
 \prod_p \det(1_{m+1} - A_{p,m} \cdot p^{m(k-\frac12)-s})^{-1}.
\end{eqnarray*}
Here, if $m=0$ we set $L(s, \mbox{sym}_0\, f)  := \zeta(s) = \prod_p (1-p^{-s})^{-1}$.

%

In~\cite{Sch} two functions $\alpha(n,m,r)$ and $\beta(n,m,r)$ are introduced.
We quote these definitions from~\cite{Sch}:
Let $ \alpha(r,m,n)$ be the number of possibilities to choose $m$ different numbers
from the set $\{1-2n,3-2n,...,2n-1 \}$ (not observing the order) such that their sum equals $r$.
We define
\begin{eqnarray}\label{id:beta}
 \beta(r,m,n) &:=& \alpha(r,m,n) - \alpha(r,m-2,n).
\end{eqnarray}
The reader is referred to~\cite{Sch}
for the table of numerical values of $\alpha(r,m,n)$ and $\beta(r,m,n)$ $(n \leq 6)$.

From Lemma~\ref{lem:spin_ms} the following theorem follows.

\begin{theorem}[\cite{Mu,Sch}]
The spinor $L$-function of $F_{2n}$ satisfies the identity
\[
 L(s,F_{2n},\mbox{spin})
  =
 \prod_{m=0}^n \prod_{\begin{smallmatrix}r = -m(2n-m) \\ (step\ 2) \end{smallmatrix}}^{m(2n-m)} 
 L(s-m(k-\frac12)+\frac{r}{2}, \mbox{sym}_{n-m}\, f)^{\beta(r,m,n)}.
\]

\end{theorem}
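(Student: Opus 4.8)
The plan is to verify the identity prime by prime, by inserting the explicit Satake parameters of $F_{2n}$ supplied by Lemma~\ref{lem:spin_ms} into the definition of the spinor $L$-function and comparing the resulting local factor with that of the right-hand side. Fix a prime $p$ and abbreviate $X := p^{\,n(k-\frac12)-s}$. By Lemma~\ref{lem:spin_ms} we may take $\mu_0 = \alpha_p^{-n}p^{\,n(k-\frac12)}$ and $\{\mu_1,\dots,\mu_{2n}\} = \{\alpha_p\,p^{\,e} : e \in E\}$, where $E := \{-n+\frac12,\,-n+\frac32,\,\dots,\,n-\frac12\}$ is a set of $2n$ distinct half-integers, so that the indices $1,\dots,2n$ are in bijection with $E$. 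For a subset $S \subseteq \{1,\dots,2n\}$ of cardinality $j$ one computes
\[
\mu_0\prod_{i\in S}\mu_i \;=\; \alpha_p^{\,j-n}\;p^{\,n(k-\frac12)+\sigma(S)},
\]
where $\sigma(S)$ is the sum of the $e\in E$ indexed by $S$. Hence the reciprocal of the local factor of $L(s,F_{2n},\mbox{spin})$ is $\prod_{S}\bigl(1-\alpha_p^{\,j-n}p^{\,\sigma(S)}X\bigr)$, the product running over all subsets (the empty set contributing the factor $1-\mu_0p^{-s}$).

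Next I would translate this into Schmidt's combinatorics. Since $E = \frac12\{1-2n,\,3-2n,\,\dots,\,2n-1\}$ is exactly one half of the $2n$-element set underlying the definition of $\alpha(r,m,n)$, writing $\sigma(S)=\frac{r}{2}$ identifies $r$ with the sum of the chosen elements of that set, and the number of $S$ with $|S|=j$ and $\sigma(S)=\frac{r}{2}$ is precisely $\alpha(r,j,n)$. Grouping the subsets by the pair $(j,r)$ turns the reciprocal local spinor factor into
\[
\prod_{j=0}^{2n}\ \prod_{r}\bigl(1-\alpha_p^{\,j-n}\,p^{\,r/2}\,X\bigr)^{\alpha(r,j,n)} .
\]
On the other side, the reciprocal local factor of $L\bigl(s-m(k-\frac12)+\frac{r}{2},\,\mbox{sym}_{n-m}\,f\bigr)$ equals $\prod_{\ell=0}^{n-m}\bigl(1-\alpha_p^{\,(n-m)-2\ell}\,p^{-r/2}\,X\bigr)$; after replacing $r$ by $-r$, which is harmless because $\beta(r,m,n)=\beta(-r,m,n)$ and the product over $r$ is symmetric, both sides are now built from the same linear factors $\bigl(1-\alpha_p^{\,a}p^{\,r/2}X\bigr)$, indexed by the exponent $a$ of $\alpha_p$.

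Comparing the multiplicity of a fixed such factor and putting $a=j-n$ reduces the theorem to the combinatorial identity
\[
\alpha(r,j,n)\;=\;\sum_{\substack{0\le m\le \min(j,\,2n-j)\\ m\equiv j\ (\mathrm{mod}\ 2)}}\beta(r,m,n),
\]
which records that, within $\mbox{sym}_{n-m}\,f$, there is a unique index $\ell$ with $(n-m)-2\ell=j-n$ precisely for these $m$. Since $\beta(r,m,n)=\alpha(r,m,n)-\alpha(r,m-2,n)$, the sum telescopes to $\alpha\bigl(r,\min(j,2n-j),n\bigr)$, the lower endpoint contributing nothing because $\alpha(r,m,n)=0$ for $m<0$; and the symmetry $\alpha(r,m,n)=\alpha(r,2n-m,n)$, obtained by sending a subset to its complement inside a set symmetric about $0$, rewrites $\min(j,2n-j)$ as $j$. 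This establishes the identity, hence the equality of the two reciprocal local factors for every $p$, and the asserted equality of Euler products on their common domain of absolute convergence.

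I expect the main obstacle to be exactly this combinatorial bookkeeping: tracking which symmetric power $\mbox{sym}_{n-m}\,f$ and which shift $\frac{r}{2}$ should absorb a given spinor root $\alpha_p^{\,j-n}p^{\,\sigma(S)}$, and checking that the ranges of $m$ and $r$ together with the parity condition $m\equiv j$ line up on both sides so that the telescoping is exact and no linear factor is over- or under-counted. The analytic part is routine once the local factors agree, so the whole difficulty is concentrated in verifying Schmidt's identity above.
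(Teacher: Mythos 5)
Your proposal is correct and is exactly the route the paper takes: the paper states that the theorem ``follows from Lemma~3.1'' (deferring the details to Schmidt and Murakawa), and your computation --- substituting the Satake parameters $\mu_0=\alpha_p^{-n}p^{n(k-\frac12)}$, $\mu_i=\alpha_p p^{e}$ into the spinor Euler factor and matching multiplicities of the linear factors $\bigl(1-\alpha_p^{j-n}p^{r/2}X\bigr)$ via the telescoping identity $\sum_{m\le\min(j,2n-j),\,m\equiv j}\beta(r,m,n)=\alpha(r,j,n)$ --- is precisely that deduction, carried out correctly. The combinatorial bookkeeping (parity of $r$, the ranges $|r|\le m(2n-m)$, and the complementation symmetry $\alpha(r,m,n)=\alpha(r,2n-m,n)$) all checks out.
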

We remark that for any $n$ we have $\beta(r,0,n) =  1$ if $r = 0$, $0$
otherwise. Hence 
\[
 L(s,F_{2n},\mbox{spin})
 =
 L(s, \mbox{sym}_n\, f) \prod_{m=1}^{n} \prod_{r} L(s-m(k-\frac12)+\frac{r}{2}, \mbox{sym}_{n-m}\, f)^{\beta(r,m,n)} .
\]

\section{The spinor $L$-function of Miyawaki-Ikeda lifts} \label{sec:spinor_Miyawaki}

In this Section~\ref{sec:spinor_Miyawaki},
first we review Miyawaki-Ikeda lifts in Section~\ref{ss:mi_lift}.
And we restrict ourselves to
a special case of Miyawaki-Ikeda lifts, namely we consider the lift
\begin{eqnarray*}
 S_{2k}(\Gamma_1) \otimes S_{k+n}(\Gamma_1) \rightarrow
 S_{k+n}(\Gamma_{2n-1}).
\end{eqnarray*}
From Section~\ref{ss:index_shift} to Section~\ref{ss:gen_maass} we recall
some properties of Fourier-Jacobi coefficients of Duke-Imamoglu-Ibukiyama-Ikeda lifts.
Finally, we shall prove Theorem~\ref{thm:main} in Section~\ref{ss:proof}
and give some examples of spinor $L$-functions in Section~\ref{ss:example}.

\subsection{Miyawaki-Ikeda lifts}\label{ss:mi_lift}
We review Miyawaki-Ikeda lifts shown by Ikeda~\cite{Ik2}.
Let $r$ be an integer such that $r \leq n$.
Let $f \in S_{2k}(\Gamma_1)$ be a normalized Hecke eigenform and let
$g \in S_{k+n}(\Gamma_r)$ be a Hecke eigenform.
Let $F_{2n} \in
S_{k+n}(\Gamma_{2n})$ be the Duke-Imamoglu-Ibukiyama-Ikeda lift of $f$.
For $\tau \in \H_{2n-r}$ we put
\begin{eqnarray*}
 \mathcal{F}_{f,g}(\tau)
 :=
 \int_{\Gamma_r \backslash \H_r}
    F_{2n}\left(\smat{\tau}{0}{0}{\omega} \right) \overline{g(-\overline{\omega})}
            \det\left(\mbox{Im}(\omega)\right)^{k+n} \, d\omega .
\end{eqnarray*}
Here
 $d\omega :=
   \det\left(\mbox{Im}(\omega)\right)^{-r-1} \prod_{1\leq i,j \leq r} d\, \mbox{Re}(\omega_{i,j}) \cdot d\, \mbox{Im}(\omega_{i,j})$.
Then it is not difficult to show that the form $\mathcal{F}_{f,g}$
belongs to $S_{k+n}(\Gamma_{2n-r})$.

In \cite{Ik2} Ikeda has  shown:
 If $\mathcal{F}_{f,g} \not \equiv 0$, then 
 the function $\mathcal{F}_{f,g}$ is a common eigenform for Hecke operators
 for
 $\mathcal{H}(\Gamma_{2n-r}, \mbox{Sp}_{2n-r}(\Qq))$.
 Moreover the standard $L$-function of $\mathcal{F}_{f,g}$ satisfies the identity
\begin{eqnarray*}
 L(s,\mathcal{F}_{f,g}, \mbox{st})
 &=&
 L(s,g,\mbox{st}) \prod_{i=1}^{2n-2r} L(s+k+n-r-i,f) .
\end{eqnarray*}

In the present article we only consider the case $r=1$, but $n$ is any positive-integer.
In the case of $r=1$, the above identity of $L(s,\mathcal{F}_{f,g},\mbox{st})$ is also shown in~\cite{MaassRe}
by using certain generalized Maass relations.

\subsection{Index-shift operators for Jacobi forms}\label{ss:index_shift}
For $M = \smat{A}{B}{C}{D} \in \mbox{GSp}_{n}^+(\R)$ we define
\begin{eqnarray*}
 \rho(M) := \left( \begin{smallmatrix} 
                                             A &  & B & \\
                                                 & \nu(M) & & \\
                                             C &  & D & \\ 
                                                 & & & 1
                             \end{smallmatrix} \right)
                             \in \mbox{GSp}_{n+1}^+(\R).
\end{eqnarray*}

For a function $\phi$ on $\mathfrak{H}_{n} \times \C^{n}$ and for
$M = \smat{A}{B}{C}{D} \in \mbox{GSp}_{n}^+(\R)$ we define
\begin{eqnarray*}
 (\phi|_{k,m}\rho(M))(\tau,z)
  &:=&
  \det(C\tau + D)^{-k}\, e\! \left( m\, \nu(M)\, {^t z}(C\tau+D)^{-1}C z\right)\, \phi(M\cdot(\tau,z)) ,
\end{eqnarray*}
where $e(x) := e^{2\pi i x}$ and $M\cdot (\tau,z) := (M\cdot \tau, \nu(M) {^t (C\tau+D)}^{-1}z)$.



For any positive integer $N$ and for $\phi \in J_{k,m}(\Gamma_n)$,
we define two operators $V_{n}(N)$ and $U_{n}(N)$ :
\begin{eqnarray*}
 \phi| V_{n}(N)
 &:=&
 N^{(n+1)k/2-<n>}\sum_{M \in \Gamma_{n}\backslash S_{n}(N)}
                                   \phi|_{k,m} \rho(M), \\
 \phi| U_{n}(N)
 &:=&
 N^{(n+1)k-2<n>} \phi|_{k,m} \rho(N \cdot 1_{2n})\\
 &=&
 N^{-(n+1)n + k} \phi(\tau,Nz).
\end{eqnarray*}
We remark that $\phi| V_{n}(N) \in J_{k,mN}(\Gamma_n)$ and $\phi|U_{n}(N) \in J_{k,mN^2}(\Gamma_n)$
(cf.~\cite[Proposition 4.1]{Ya}.)
And if $\phi \in J_{k,m}^{cusp}(\Gamma_n)$, then $\phi|V_{n}(N)$ and $\phi|U_{n}(N)$
are Jacobi cusp forms.


\subsection{Restriction map}

We define a linear map
  $\mathbb{W}_0 : J_{k,m}^{cusp}(\Gamma_{n}) \rightarrow S_{k}(\Gamma_{n})$
by
\begin{eqnarray*}
(\mathbb{W}_0(\phi))(\tau) := \phi(\tau,0)
\end{eqnarray*}
 for
  $\phi \in J_{k,m}^{cusp}(\Gamma_{n})$ and for $\tau \in \H_n$.

By straightforward calculation we obtain the following lemma.
\begin{lemma}\label{lem:VT}
 For any $\phi \in J_{k,m}^{cusp}(\Gamma_{n})$
 we have
 \begin{eqnarray*}
  \mathbb{W}_0(\phi|V_{n}(N)) = N^{-(n-1)k/2} (\mathbb{W}_0\phi)|T_{n}(N)
 \end{eqnarray*}
 and
 \begin{eqnarray*}
  \mathbb{W}_0(\phi|U_{n}(N)) = N^{-(n+1)n + k} \, \mathbb{W}_0 \phi.
 \end{eqnarray*}
\end{lemma}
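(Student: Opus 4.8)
The plan is to reduce both identities to the single observation that restriction to $z=0$ turns the Jacobi slash action $|_{k,m}\rho(M)$ into the ordinary Siegel slash action $|_k M$. First I would record this reduction: for any $M = \smat{A}{B}{C}{D} \in \mbox{GSp}_n^+(\R)$, setting $z=0$ in the definition of $\phi|_{k,m}\rho(M)$ makes the exponential factor $e(m\,\nu(M)\,{}^t z(C\tau+D)^{-1}Cz)$ equal to $1$ and makes the elliptic variable $\nu(M)\,{}^t(C\tau+D)^{-1}z$ vanish, so that
\[
 (\phi|_{k,m}\rho(M))(\tau,0) = \det(C\tau+D)^{-k}\,\phi(M\cdot\tau,0) = ((\mathbb{W}_0\phi)|_k M)(\tau).
\]
Thus $\mathbb{W}_0$ intertwines the two slash actions summand by summand.

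For the first identity, I would apply this to each term in the definition of $V_n(N)$. Since $V_n(N)$ and $T_n(N)$ sum over the same coset representatives $M \in \Gamma_n\backslash S_n(N)$, the reduction gives $\mathbb{W}_0(\phi|V_n(N)) = N^{(n+1)k/2-<n>}\sum_{M}((\mathbb{W}_0\phi)|_k M)$, whereas by definition $(\mathbb{W}_0\phi)|T_n(N) = N^{nk-<n>}\sum_{M}((\mathbb{W}_0\phi)|_k M)$. The coset sum is identical, so only the ratio of normalizing powers of $N$ survives; since $(n+1)k/2 - <n> - (nk-<n>) = (n+1)k/2 - nk = -(n-1)k/2$, this yields exactly $\mathbb{W}_0(\phi|V_n(N)) = N^{-(n-1)k/2}(\mathbb{W}_0\phi)|T_n(N)$.

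For the second identity, I would simply use the explicit form $\phi|U_n(N) = N^{-(n+1)n+k}\,\phi(\tau,Nz)$ already recorded in Section~\ref{ss:index_shift}. Evaluating at $z=0$ is immediate, since $\phi(\tau,N\cdot 0)=\phi(\tau,0)$, giving $\mathbb{W}_0(\phi|U_n(N))(\tau) = N^{-(n+1)n+k}\,\phi(\tau,0) = N^{-(n+1)n+k}(\mathbb{W}_0\phi)(\tau)$.

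There is no substantial obstacle here; the entire content lies in the clean reduction of the first paragraph, and the only point needing care is the bookkeeping of the normalizing exponents of $N$ in the definitions of $V_n(N)$ and $T_n(N)$, which I would track explicitly to land on the factor $-(n-1)k/2$. That $V_n(N)$ and $U_n(N)$ preserve cuspidality, so that $\mathbb{W}_0$ is defined on their images, is already guaranteed by the remarks following the definitions of these operators in Section~\ref{ss:index_shift}.
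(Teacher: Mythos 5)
Your proposal is correct and matches the paper's (unwritten) argument: the paper dismisses this lemma with ``by straightforward calculation,'' and your calculation is exactly that computation, resting on the observation that setting $z=0$ kills the exponential factor and the elliptic variable so that $\mathbb{W}_0$ intertwines $|_{k,m}\rho(M)$ with $|_{k}M$ term by term. The exponent bookkeeping $(n+1)k/2 - \langle n\rangle - (nk - \langle n\rangle) = -(n-1)k/2$ and the evaluation of $U_n(N)$ at $z=0$ are both verified correctly.
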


\subsection{A generalized Maass relation}\label{ss:gen_maass}

Let
 $F_{2n}\left(\smat{\tau}{z}{{^t z}}{\omega} \right) = \sum_m \Phi_m^{F_{2n}}(\tau,z) e^{2\pi i m \omega}$
be the Fourier-Jacobi expansion of $F_{2n}$, where
$\tau \in \H_{2n-1}$, $\omega \in \H_1$ and $z \in M_{2n-1,1}(\C)$.
Here $F_{2n}$ is the Duke-Imamoglu-Ibukiyama-Ikeda lift of $f \in S_{2k}(\Gamma_1)$.

We define the operator $D_{2n-1}(m,\{\alpha_p^{\pm} \})$
through the  formal Dirichlet-series

\begin{eqnarray*}
 &&
 \sum_{m > 0} \frac{D_{2n-1}(m,\{\alpha_p^{\pm}\})}{m^s}
 := \\
 &&
 \qquad
 \prod_{p} (1 - G_p(\alpha_p) V_{2n-1}(p) p^{\frac12(n-1)(n+2)-s} 
    +U_{2n-1}(p) p^{2n(2n-1)-1-2s})^{-1},
\end{eqnarray*}
where
\begin{eqnarray*}
 G_p(\alpha_p) := G_{p,2n-1}(\alpha_p) = 
 \begin{cases}
  \displaystyle{\prod_{i=1}^{n-1}
                  \left\{
                    \left( 1+ \alpha_p p^{(1-2i)/2} \right) \left( 1 + \alpha_p^{-1} p^{(1-2i)/2} \right)
                  \right\}^{-1}}
  & \mbox{ if } n > 1, \\
  1 & \mbox{ if } n = 1.
 \end{cases}
\end{eqnarray*}
Here the set of complex numbers $\{\alpha_p^{\pm}\}$ is the same notation in
Section~\ref{sec:intro},~\ref{sec:Ikeda}.
Because any two operators in $\{V_{2n-1}(p)\}_{p} \cup \{U_{2n-1}(p)\}_{p}$
are compatible, the above definition of $D_{2n-1}(m,\{\alpha_p^{\pm}\})$ is well-defined.

In particular, for any prime $p$ we have
\begin{eqnarray*}
 D_{2n-1}(p,\{\alpha_p^{\pm}\})
 &=&
 G_p(\alpha_p)\, p^{\frac12(n-1)(n+2)}\, V_{2n-1}(p) .
\end{eqnarray*}

In~\cite{FJlift} we obtained the following relation.
\begin{lemma}[\cite{FJlift}]\label{lem:FJ1}
For any positive integer $m$ we have
\begin{eqnarray*}
 \Phi_m^{F_{2n}} = \Phi_1^{F_{2n}}|D_{2n-1}(m,\{\alpha_p^{\pm}\}).
\end{eqnarray*}
\end{lemma}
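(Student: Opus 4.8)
The plan is to reduce the lemma to a single local recursion and then to an identity for Siegel series. Since the defining Dirichlet series is an Euler product and, as noted in the excerpt, the operators $V_{2n-1}(p)$ and $U_{2n-1}(p)$ are mutually compatible, the coefficient $D_{2n-1}(m,\{\alpha_p^{\pm}\})$ is multiplicative in $m$ and, for each fixed prime, satisfies the three-term recursion forced by the quadratic denominator of the local factor. Unwinding these relations, the identity $\Phi_m^{F_{2n}} = \Phi_1^{F_{2n}}|D_{2n-1}(m,\{\alpha_p^{\pm}\})$ for all $m$ becomes equivalent, by induction on the number of prime divisors of $m$, to the single recursion
\[
 \Phi_{pM}^{F_{2n}}
 = G_p(\alpha_p)\, p^{\frac12(n-1)(n+2)}\, \Phi_{M}^{F_{2n}}|V_{2n-1}(p)
   - p^{2n(2n-1)-1}\, \Phi_{M/p}^{F_{2n}}|U_{2n-1}(p),
\]
to be established for every prime $p$ and every positive integer $M$, with the convention $\Phi_{M/p}^{F_{2n}} := 0$ when $p \nmid M$. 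Here the base value $D_{2n-1}(p) = G_p(\alpha_p)\, p^{\frac12(n-1)(n+2)} V_{2n-1}(p)$ of the excerpt is exactly the $\delta=1$ instance, and the cross-prime case follows because operators at distinct primes commute. Thus it suffices to compare the two sides prime by prime.

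To prove this recursion I would descend to Fourier coefficients. Expanding each $\Phi_m^{F_{2n}}$ in its Fourier series and matching with the Siegel expansion of $F_{2n}$ identifies the coefficient of $\Phi_m^{F_{2n}}$ at $(N,r)$ with the Fourier coefficient $A(T)$ of $F_{2n}$ at $T = \smat{N}{r/2}{{}^t r/2}{m}$, where $N$ is half-integral of size $2n-1$ and $r \in M_{2n-1,1}(\Z)$. I would then invoke Ikeda's explicit formula \cite{Ik} for these coefficients, which writes $A(T)$ as the product of a Shimura-correspondence coefficient $c(|D_T|)$, depending on $T$ only through the fundamental discriminant underlying $D_T = (-1)^n\det(2T)$, and a finite product $\prod_q \tilde{F}_q(T;\alpha_q)$ of normalized local Siegel series, each $\tilde{F}_q(T;X)$ being a Laurent polynomial in $X$ invariant under $X \mapsto X^{-1}$. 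Meanwhile, the operators $V_{2n-1}(p)$ and $U_{2n-1}(p)$ act on Fourier coefficients through explicit representatives of $\Gamma_{2n-1}\backslash S_{2n-1}(p)$, so that the $(N,r)$-coefficients of the two terms on the right-hand side become finite $\Z$-linear combinations of coefficients $A(T')$ for matrices $T'$ obtained from $T$ by $p$-power transformations. Because $c(|D_{T'}|)$ and the factors $\tilde{F}_q$ with $q \neq p$ are unaffected by these transformations, they factor out and cancel, reducing the recursion to a purely local identity at $p$ among the polynomials $\tilde{F}_p$.

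The \emph{heart of the proof} is this local identity: as $T$ ranges over the matrices produced by the cosets defining $V_{2n-1}(p)$ and $U_{2n-1}(p)$, the normalized Siegel series $\tilde{F}_p(T;\alpha_p)$ must obey precisely the three-term recursion displayed above, with the scalar $G_p(\alpha_p)$ accounting exactly for the contribution of the Satake parameters $\alpha_p^{\pm}$ of $f$ at $p$. Following the method of \cite{FJlift}, I would establish this by combining the functional equation of the Siegel series with its explicit recursion relations relating $\tilde{F}_p(T;X)$ to Siegel series attached to matrices of smaller $p$-adic content. The \emph{main obstacle} I expect is the case in which $p$ divides the content of $(N,r,m)$, that is, where $T$ is $p$-adically imprimitive: there the coset decomposition of $V_{2n-1}(p)$ genuinely mixes several $\mathrm{GL}_{2n-1}(\Z_p)$-equivalence classes of $(N,r)$, and one must track how the local densities recombine across these classes so as to reproduce the factor $G_p(\alpha_p)$. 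Disentangling this degenerate local combinatorics, by induction on the elementary-divisor type of $T$ over $\Z_p$ together with the functional equation, is the delicate step; once it is carried through for every $p$-adic type, the recursion holds for all $(N,r)$, and the lemma follows.
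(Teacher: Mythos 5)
You should first be aware that the paper does not prove this lemma at all: it is imported verbatim from the reference \cite{FJlift} (``In \cite{FJlift} we obtained the following relation''), so the only meaningful comparison is with the proof in that reference. Your overall strategy --- use multiplicativity and the quadratic local Euler factor defining $D_{2n-1}(m,\{\alpha_p^{\pm}\})$ to reduce the identity to the single three-term recursion
\[
 \Phi_{pM}^{F_{2n}}
 = G_p(\alpha_p)\, p^{\frac12(n-1)(n+2)}\, \Phi_{M}^{F_{2n}}|V_{2n-1}(p)
   - p^{2n(2n-1)-1}\, \Phi_{M/p}^{F_{2n}}|U_{2n-1}(p),
\]
and then verify that recursion on Fourier coefficients via Ikeda's explicit formula and a purely local identity for normalized Siegel series --- is exactly the route of \cite{FJlift}, which in turn follows Yamazaki's treatment \cite{Ya} of the Siegel--Eisenstein case. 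The reduction step is correct, and your identification of where the difficulty sits (the $p$-imprimitive matrices $T$, where the $V_{2n-1}(p)$ cosets mix several $\mathrm{GL}_{2n-1}(\Z_p)$-classes) is accurate.

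Nevertheless, as a proof the proposal has a genuine gap precisely where the entire content of the lemma is concentrated. The ``heart'' you name --- that the normalized local Siegel series $\tilde{F}_p(T;\alpha_p)$ obeys the displayed three-term recursion under the coset action of $V_{2n-1}(p)$ and $U_{2n-1}(p)$, with the scalar $G_p(\alpha_p)$ emerging correctly in all $p$-adic types --- is asserted rather than established; ``combining the functional equation with its explicit recursion relations'' describes the obstacle, it does not overcome it. In \cite{FJlift} this local computation is the bulk of the work and rests on explicit formulas and recursions for local Siegel series (of Katsurada's type). A second, smaller inaccuracy: Ikeda's formula \cite{Ik} reads $A(T) = c(|d_T|)\,\mathfrak{f}_T^{\,k-1/2}\prod_q \tilde{F}_q(T;\alpha_q)$ with $D_T = d_T\,\mathfrak{f}_T^2$, $d_T$ fundamental; you omit the factor $\mathfrak{f}_T^{\,k-1/2}$, and its $p$-part \emph{does} change under the transformations $T\mapsto T'$ produced by the cosets, so it cannot be ``factored out and cancelled'' along with the prime-to-$p$ data --- it must be carried through the local identity (equivalently, absorbed into a renormalization of $\tilde{F}_p$). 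Until the local Siegel-series identity is actually proved, the argument remains a program rather than a proof.
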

We remark that the relation in Lemma~\ref{lem:FJ1} was originally given
by Yamazaki~\cite{Ya} in the case of Siegel-Eisenstein series
of arbitrary degrees.
Lemma~\ref{lem:FJ1} is a certain kind of generalization of the Maass relation.

In~\cite{KK} Katsurada and Kawamura showed the following lemma.
\begin{lemma}[\cite{KK}]\label{lem:kk}
For any $\phi \in J_{k,l}(\Gamma_{2n-1})$
and for any set of complex numbers $\{c_p^{\pm}\}$
we have
\begin{eqnarray*}
 &&
 \phi| D_{2n-1}(m,\{c_p^{\pm}\}) \cdot D_{2n-1}(m',\{c_p^{\pm}\}) \\
 &=&
 \sum_{d| gcd(m,m')} d^{2n(2n-1)-1} \phi|D_{2n-1}\!\left(\frac{mm'}{d^2},\{c_p^{\pm} \}\right) \cdot U_{2n-1}(d) .
\end{eqnarray*}
\end{lemma}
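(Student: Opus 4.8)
The plan is to read the relation as the standard $\mathrm{GL}_2$-type multiplicativity law for the Hecke series defining $D_{2n-1}(m,\{c_p^{\pm}\})$, applied to the commuting index-shift operators $V_{2n-1}(p)$ and $U_{2n-1}(p)$. Write $\kappa := 2n(2n-1)-1$. The defining Dirichlet series is an Euler product whose local factor at $p$, with $X = p^{-s}$, equals $(1 - a_p X + b_p X^2)^{-1}$, where
\[ a_p := G_p(c_p)\, p^{\frac12(n-1)(n+2)}\, V_{2n-1}(p), \qquad b_p := p^{\kappa}\, U_{2n-1}(p). \]
The factors $G_p(c_p)$ and the powers of $p$ are scalars, and by the stated compatibility of the operators in $\{V_{2n-1}(p)\}_p \cup \{U_{2n-1}(p)\}_p$ the operators $a_p$ and $b_p$ commute. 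Thus $D_{2n-1}(p^r,\{c_p^{\pm}\})$ is exactly the coefficient of $X^r$ in $(1 - a_p X + b_p X^2)^{-1}$, a polynomial in the two commuting operators $a_p,b_p$.

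First I would reduce to a single prime. Because the series is an Euler product and operators attached to distinct primes commute, $D_{2n-1}(m,\cdot) = \prod_p D_{2n-1}(p^{v_p(m)},\cdot)$ is multiplicative; likewise $U_{2n-1}(d)U_{2n-1}(d') = U_{2n-1}(dd')$ for coprime $d,d'$ (read off directly from $\phi|U_{2n-1}(N) = N^{-2n(2n-1)+k}\phi(\tau,Nz)$), and $d\mapsto d^{\kappa}$ is completely multiplicative. Since the sum over $d \mid \gcd(m,m')$ and all three factors split as products over primes, it suffices to prove the local identity
\[ D_{2n-1}(p^a,\cdot)\, D_{2n-1}(p^b,\cdot) = \sum_{i=0}^{\min(a,b)} p^{i\kappa}\, D_{2n-1}(p^{a+b-2i},\cdot)\, U_{2n-1}(p^i) \]
for every prime $p$ and all $a,b\geq 0$, and then to take the product over $p$.

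For the local identity I would work entirely inside the commutative ring $\C[a_p,b_p][[X,Y]]$. Put $\mathcal D(X) := \sum_{r\geq 0} D_{2n-1}(p^r,\cdot)\,X^r = (1 - a_p X + b_p X^2)^{-1}$. Reindexing $\sum_{a,b}\big(\sum_{i} b_p^i D_{2n-1}(p^{a+b-2i},\cdot)\big)X^a Y^b$ by $a=i+s$, $b=i+t$ turns the claimed local identity into the rational-function identity
\[ \mathcal D(X)\,\mathcal D(Y) = \frac{1}{1 - b_p XY}\cdot\frac{X\,\mathcal D(X) - Y\,\mathcal D(Y)}{X - Y}, \]
which reduces to the polynomial identity $X(1 - a_pY + b_pY^2) - Y(1 - a_pX + b_pX^2) = (X-Y)(1 - b_pXY)$ and is immediate. (Equivalently, it is the symmetric-function relation $h_a h_b = \sum_i (\theta_1\theta_2)^i h_{a+b-2i}$ for the two formal roots of $1 - a_pX + b_pX^2$.) Comparing coefficients gives $D_{2n-1}(p^a,\cdot)D_{2n-1}(p^b,\cdot) = \sum_i b_p^i D_{2n-1}(p^{a+b-2i},\cdot)$, and substituting $b_p^i = p^{i\kappa} U_{2n-1}(p)^i = p^{i\kappa} U_{2n-1}(p^i)$ with $d=p^i$ completes the local case.

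The main difficulty I anticipate is bookkeeping rather than algebra. Every operator involved shifts the Jacobi index, so commutativity must be read on the graded space $\bigoplus_l J_{k,l}(\Gamma_{2n-1})$, and one must verify that the compositions are index-matching: both sides of the local identity send $J_{k,l}$ to $J_{k,lp^{a+b}}$, using that $U_{2n-1}(p^i)$ multiplies the index by $p^{2i}$. The essential input is the compatibility of $\{V_{2n-1}(p)\}_p \cup \{U_{2n-1}(p)\}_p$, which makes $a_p$ and $b_p$ commute and guarantees that $D_{2n-1}(p^{a+b-2i},\cdot)$ and $U_{2n-1}(p^i)$ commute inside the product; granting this, the remaining content is the formal identity above.
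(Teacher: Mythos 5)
Your proof is correct. Note, however, that the paper does not prove this lemma at all: it is quoted verbatim from Katsurada--Kawamura \cite{KK}, so there is no internal argument to compare against. What you supply is a complete, self-contained derivation, and it works precisely because of how the paper sets things up: $D_{2n-1}(m,\{c_p^{\pm}\})$ is \emph{defined} by the Euler product with local factor $(1-a_pX+b_pX^2)^{-1}$, $a_p=G_p(c_p)p^{\frac12(n-1)(n+2)}V_{2n-1}(p)$, $b_p=p^{2n(2n-1)-1}U_{2n-1}(p)$, and the paper explicitly asserts the mutual compatibility of the operators $\{V_{2n-1}(p)\}_p\cup\{U_{2n-1}(p)\}_p$. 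Granting that commutativity (read on the graded space $\bigoplus_l J_{k,l}(\Gamma_{2n-1})$, as you correctly insist), the lemma is the purely formal Hecke multiplicativity $h_ah_b=\sum_{i\le\min(a,b)}(\theta_1\theta_2)^ih_{a+b-2i}$ for the coefficients of $(1-a_pX+b_pX^2)^{-1}$, combined with complete multiplicativity of $N\mapsto U_{2n-1}(N)$ (immediate from $\phi|U_{2n-1}(N)=N^{-2n(2n-1)+k}\phi(\tau,Nz)$) and the factorization of the divisor sum over $d\mid\gcd(m,m')$ into local pieces. Your generating-function verification of
\[
\mathcal D(X)\mathcal D(Y)=\frac{1}{1-b_pXY}\cdot\frac{X\mathcal D(X)-Y\mathcal D(Y)}{X-Y}
\]
is the standard and cleanest way to package this, and your index bookkeeping (both sides of the local identity map $J_{k,l}$ to $J_{k,lp^{a+b}}$) is the one point where a careless argument could go wrong; you handled it. The only substantive difference from the source is that \cite{KK} establishes the relation in their own framework for the operators themselves, whereas your argument shows the identity is already forced by the generating-series definition plus commutativity --- a slightly more economical route that is entirely adequate for the role the lemma plays in this paper.
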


From Lemma~\ref{lem:FJ1} and Lemma~\ref{lem:kk}, we have the following corollary.
\begin{cor}\label{cor:maass}
For any positive integers $m$ and $m'$, we have
\begin{eqnarray*}
 \Phi_m^{F_{2n}}|D_{2n-1}(m',\{\alpha_p^{\pm}\})
 &=&
 \sum_{d| gcd(m,m')} d^{2n(2n-1)-1} \Phi_{\frac{mm'}{d^2}}^{F_{2n}} | U_{2n-1}(d) .
\end{eqnarray*}
\end{cor}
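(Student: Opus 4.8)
The plan is to reduce the entire identity to the single index-one Fourier--Jacobi coefficient $\Phi_1^{F_{2n}}$ and then to invoke the composition law of Lemma~\ref{lem:kk} verbatim, translating back and forth with Lemma~\ref{lem:FJ1}. First I would use Lemma~\ref{lem:FJ1} to rewrite the left-hand side: since $\Phi_m^{F_{2n}} = \Phi_1^{F_{2n}}|D_{2n-1}(m,\{\alpha_p^{\pm}\})$, we obtain
\begin{eqnarray*}
 \Phi_m^{F_{2n}}|D_{2n-1}(m',\{\alpha_p^{\pm}\})
 &=&
 \Phi_1^{F_{2n}}|D_{2n-1}(m,\{\alpha_p^{\pm}\}) \cdot D_{2n-1}(m',\{\alpha_p^{\pm}\}).
\end{eqnarray*}

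Next, I would observe that because $F_{2n} \in S_{k+n}(\Gamma_{2n})$ is a cusp form, its first Fourier--Jacobi coefficient satisfies $\Phi_1^{F_{2n}} \in J_{k+n,1}^{cusp}(\Gamma_{2n-1}) \subset J_{k+n,1}(\Gamma_{2n-1})$, so that Lemma~\ref{lem:kk} applies with $\phi = \Phi_1^{F_{2n}}$ and $\{c_p^{\pm}\} = \{\alpha_p^{\pm}\}$. Applying it to the right-hand side above yields
\begin{eqnarray*}
 \Phi_1^{F_{2n}}|D_{2n-1}(m,\{\alpha_p^{\pm}\}) \cdot D_{2n-1}(m',\{\alpha_p^{\pm}\})
 &=&
 \sum_{d| gcd(m,m')} d^{2n(2n-1)-1}\, \Phi_1^{F_{2n}}|D_{2n-1}\!\left(\frac{mm'}{d^2},\{\alpha_p^{\pm}\}\right) \cdot U_{2n-1}(d).
\end{eqnarray*}
Here each $d$ dividing $gcd(m,m')$ divides both $m$ and $m'$, hence $d^2 \mid mm'$ and $mm'/d^2$ is again a positive integer; this guarantees that each operator $D_{2n-1}(mm'/d^2,\{\alpha_p^{\pm}\})$ appearing on the right is well-defined.

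Finally I would apply Lemma~\ref{lem:FJ1} a second time, now with $mm'/d^2$ in place of $m$, to recognize $\Phi_1^{F_{2n}}|D_{2n-1}(mm'/d^2,\{\alpha_p^{\pm}\}) = \Phi_{mm'/d^2}^{F_{2n}}$ inside each summand; substituting this into the previous display produces exactly the asserted identity. There is no serious obstacle here, as the argument is a purely formal three-step manipulation (Lemma~\ref{lem:FJ1}, then Lemma~\ref{lem:kk}, then Lemma~\ref{lem:FJ1} once more). The only points requiring care are confirming that $\Phi_1^{F_{2n}}$ is a genuine Jacobi form so that Lemma~\ref{lem:kk} is applicable, and that the index $mm'/d^2$ is a positive integer so that Lemma~\ref{lem:FJ1} may be reused termwise; both are immediate.
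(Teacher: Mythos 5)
Your argument is exactly the paper's intended derivation: the corollary is stated as an immediate consequence of Lemma~\ref{lem:FJ1} and Lemma~\ref{lem:kk}, obtained by writing $\Phi_m^{F_{2n}} = \Phi_1^{F_{2n}}|D_{2n-1}(m,\{\alpha_p^{\pm}\})$, applying the composition law of Lemma~\ref{lem:kk} to $\phi = \Phi_1^{F_{2n}}$, and then reabsorbing $\Phi_1^{F_{2n}}|D_{2n-1}(mm'/d^2,\{\alpha_p^{\pm}\})$ via Lemma~\ref{lem:FJ1}. The proposal is correct and matches the paper's approach.
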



\subsection{Proof of Theorem~\ref{thm:main}}\label{ss:proof}
In this subsection we shall prove Theorem~\ref{thm:main}.

Let $f$ and $g$ be the same notation in the introduction,
and let $\mathcal{F}_{f,g}$ be the Miyawaki-Ikeda lift of $(f,g)$.
The other symbols like
$\lambda_g(p)$, $\{\alpha_p^{\pm}\}$, $\{\beta_p^{\pm} \}$, $F_{2n}$ and $\Phi_{m}^{F_{2n}}$
are the same as before.

First we shall show the following lemma.
\begin{lemma}\label{lem:Tp}
If $\mathcal{F}_{f,g} \not \equiv 0$, then $\mathcal{F}_{f,g}$ is an eigenform for 
the Hecke operator $T_{2n-1}(p)$ for any prime $p$ with the eigenvalue
\[
\lambda_g(p) p^{-(n-1)(n+2)/2}p^{(n-1)(k+n)} \prod_{1\leq i \leq n-1}
\left\{(1+\alpha_p p^{(1-2i)/2})(1+\alpha_p^{-1} p^{(1-2i)/2}) \right\}.
\]
\end{lemma}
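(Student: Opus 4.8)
The plan is to realise $\mathcal{F}_{f,g}(\tau)$ as a Petersson product in the elliptic variable $\omega$ and to convert the degree-$(2n-1)$ operator $T_{2n-1}(p)$, acting on $\tau$, into the classical weight-$(k+n)$ Hecke operator acting on $\omega$, for which $g$ is already an eigenform. Write $\psi_\tau(\omega):=F_{2n}\smat{\tau}{0}{0}{\omega}$ for $\tau\in\H_{2n-1}$ and $\omega\in\H_1$. Since the off-diagonal block vanishes, the Fourier--Jacobi expansion of $F_{2n}$ specialises to $\psi_\tau(\omega)=\sum_{m>0}(\mathbb{W}_0\Phi_m^{F_{2n}})(\tau)\,e^{2\pi i m\omega}$, so for each fixed $\tau$ the function $\psi_\tau$ belongs to $S_{k+n}(\Gamma_1)$ and $\mathcal{F}_{f,g}(\tau)=\langle\psi_\tau,g\rangle$ is, up to the standard normalisation, the weight-$(k+n)$ Petersson product of $\psi_\tau$ with $g$.

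First I would apply $T_{2n-1}(p)$ in the variable $\tau$; as it acts only on $\tau$ it commutes with the $\omega$-integration, giving $\mathcal{F}_{f,g}|T_{2n-1}(p)=\langle\,\psi_\tau|T_{2n-1}(p)\,,\,g\,\rangle$ (the Hecke operator acting on the $\tau$-variable inside the pairing). The core step is to identify the operator $\psi_\tau\mapsto\psi_\tau|T_{2n-1}(p)$, read off on the coefficients $W_m:=\mathbb{W}_0\Phi_m^{F_{2n}}$, with a scalar multiple of the Hecke operator $T_1(p)$ in $\omega$. By Lemma~\ref{lem:VT} applied in weight $k+n$ one has $W_m|T_{2n-1}(p)=p^{(n-1)(k+n)}\mathbb{W}_0(\Phi_m^{F_{2n}}|V_{2n-1}(p))$; I then rewrite $V_{2n-1}(p)=G_p(\alpha_p)^{-1}p^{-\frac12(n-1)(n+2)}D_{2n-1}(p,\{\alpha_p^{\pm}\})$, expand $\Phi_m^{F_{2n}}|D_{2n-1}(p,\{\alpha_p^{\pm}\})$ by Corollary~\ref{cor:maass} with $m'=p$, and evaluate the resulting $U_{2n-1}(p)$-term once more by Lemma~\ref{lem:VT}. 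Carrying the weight $k+n$ (not $k$) through these formulas, the lowering term acquires exactly the factor $p^{k+n-1}$, so that $\mathbb{W}_0(\Phi_m^{F_{2n}}|D_{2n-1}(p,\{\alpha_p^{\pm}\}))=W_{pm}+p^{k+n-1}W_{m/p}$, with $W_{m/p}:=0$ if $p\nmid m$. Collecting scalars gives $\psi_\tau|T_{2n-1}(p)=p^{(n-1)(k+n)}G_p(\alpha_p)^{-1}p^{-\frac12(n-1)(n+2)}\,(\psi_\tau|T_1(p))$, where $T_1(p)$ is the weight-$(k+n)$ Hecke operator on $S_{k+n}(\Gamma_1)$ acting in $\omega$.

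Finally I would transfer $T_1(p)$ across the inner product: self-adjointness of $T_1(p)$ on $S_{k+n}(\Gamma_1)$ and $g|T_1(p)=\lambda_g(p)g$ (with $\lambda_g(p)\in\R$) give $\langle\psi_\tau|T_1(p),g\rangle=\lambda_g(p)\langle\psi_\tau,g\rangle=\lambda_g(p)\mathcal{F}_{f,g}(\tau)$. Hence $\mathcal{F}_{f,g}|T_{2n-1}(p)=\lambda_g(p)\,p^{(n-1)(k+n)}p^{-\frac12(n-1)(n+2)}G_p(\alpha_p)^{-1}\mathcal{F}_{f,g}$, and inserting $G_p(\alpha_p)^{-1}=\prod_{i=1}^{n-1}\{(1+\alpha_pp^{(1-2i)/2})(1+\alpha_p^{-1}p^{(1-2i)/2})\}$ yields precisely the stated eigenvalue; the hypothesis $\mathcal{F}_{f,g}\not\equiv0$ ensures it is a genuine (nonzero) eigenform. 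I expect the main obstacle to be the second step: tracking the weight exactly so that the lowering coefficient is $p^{k+n-1}$, which is what makes the induced operator in $\omega$ the true weight-$(k+n)$ Hecke operator $T_1(p)$ rather than a twisted, non-self-adjoint variant, and hence what permits the clean passage to $\lambda_g(p)$ with no residual old-form term.
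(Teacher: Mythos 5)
Your proposal is correct and follows essentially the same route as the paper: it applies Lemma~\ref{lem:VT} to convert $T_{2n-1}(p)$ into $V_{2n-1}(p)$, rewrites this via $D_{2n-1}(p,\{\alpha_p^{\pm}\})$ and Corollary~\ref{cor:maass} to obtain the coefficient recursion $W_{pm}+p^{k+n-1}W_{m/p}$, identifies the induced operator in $\omega$ with the weight-$(k+n)$ Hecke operator $T_1(p)$, and finishes by self-adjointness in the Petersson product with $g$. The scalar bookkeeping, including the factor $G_p(\alpha_p)^{-1}p^{(n-1)(k+n)-\frac12(n-1)(n+2)}$, matches the paper's constant $C_1$ exactly.
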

\begin{proof}
We put
\begin{eqnarray*}
 C_1 := \displaystyle{p^{-(n-1)(n+2)/2}p^{(n-1)(k+n)} \prod_{1\leq i \leq n-1}
\left\{(1+\alpha_p p^{(1-2i)/2})(1+\alpha_p^{-1} p^{(1-2i)/2}) \right\}}.
\end{eqnarray*}
Then, by using Lemma~\ref{lem:VT} and Corollary~\ref{cor:maass}, we have
\begin{eqnarray*}
 \mathbb{W}_0(\Phi_m^{F_{2n}}) | T_{2n-1}(p) 
 &=&
  p^{(n-1)(k+n)}\, \mathbb{W}_0(\Phi_m^{F_{2n}} | V_{2n-1}(p) )
 \\
 &=&
   G_p(\alpha_p)^{-1} p^{(n-1)(k+n)-\frac12(n-1)(n+2)}\,
 \mathbb{W}_0(\Phi_m^{F_{2n}} | D_{2n-1}(p,\{\alpha_p^{\pm} \}) )
 \\
  &=&
  C_1\,
  \mathbb{W}_0(\Phi_{mp}^{F_{2n}}  + p^{2n(2n-1)-1}\Phi_{m/p}^{F_{2n}}|U_{2n-1}(p)) 
  \\
  &=&
  C_1\, 
\left( \mathbb{W}_0(\Phi_{mp}^{F_{2n}}) + p^{k+n-1} \mathbb{W}_0(\Phi_{m/p}^{F_{2n}})  \right) .
\end{eqnarray*}
Here we regard $\Phi_{m/p}^{F_{2n}}$ as identically $0$ if $m$ is not divisible by $p$.

Hence
\begin{eqnarray*}
 F_{2n}\left(\smat{\tau}{0}{0}{\omega} \right) | T_{2n-1}(p)
 &=&
 \sum_{m > 0}\left(\mathbb{W}_0(\Phi_m^{F_{2n}}) | T_{2n-1}(p) \right) e^{2\pi i m \omega} \\
 &=&
 C_1 \sum_{m>0} \left\{ \mathbb{W}_0(\Phi_{mp}^{F_{2n}}) + p^{k+n-1} \mathbb{W}_0(\Phi_{m/p}^{F_{2n}})  \right\} e^{2\pi i m \omega} \\
 &=&
 C_1 F_{2n}\left(\smat{\tau}{0}{0}{\omega} \right) | T_1(p) ,
\end{eqnarray*}
where
the Hecke operator $T_{2n-1}(p)$ acts on $F_{2n}\left(\smat{\tau}{0}{0}{\omega} \right)$
with respect to the variable $\tau \in \H_{2n-1}$,
while the Hecke operator $T_1(p)$ acts on $F_{2n}\left(\smat{\tau}{0}{0}{\omega} \right)$
with respect to the variable $\omega \in \H_1$.

Because
\begin{eqnarray*}
 \mathcal{F}_{f,g}(\tau)
 =
 \int_{\Gamma_1\backslash \H_1} F_{2n}\left(\smat{\tau}{0}{0}{\omega} \right) \overline{g(\omega)} \mbox{Im}(\omega)^{k+n} \, d\omega ,
\end{eqnarray*}
we  conclude
\begin{eqnarray*}
 \mathcal{F}_{f,g}|T_{2n-1}(p)
 &=&
 \int_{\Gamma_1\backslash \H_1} 
      \left(F_{2n}\left(\smat{\tau}{0}{0}{\omega} \right)|T_{2n-1}(p) \right) \overline{g(\omega)} \mbox{Im}(\omega)^{k+n-2} \, d\omega
  \\
 &=&
      C_1 \int_{\Gamma_1\backslash \H_1}
                 F_{2n}\left(\smat{\tau}{0}{0}{\omega} \right) \overline{g(\omega)|T_1(p)} \mbox{Im}(\omega)^{k+n-2} \, d\omega
  \\
 &=&
 \lambda_g(p)\, C_1\, \mathcal{F}_{f,g}.
\end{eqnarray*}
Thus this lemma follows.
\end{proof}


Let $\{\mu_0,\mu_1,...,\mu_{2n-1} \}$ be the Satake parameters of
$\mathcal{F}_{f,g}$.
Here
\[
\mu_0^2 \prod_{i=1}^{2n-1} \mu_i = p^{(2n-1)(k+n)-<2n-1>}.
\]

We now calculate the Satake parameters of $\mathcal{F}_{f,g}$
by  using Lemma~\ref{lem:Tp}.
\begin{lemma}\label{lem:Ffg_eigenvalue}
For $n \geq 2$ we obtain 
\begin{eqnarray*}
\mu_0 = \alpha_p^{-n+1} \beta_p^{-1} p^{(n-1)(k-\frac12) +(k+n-1)/2}
\end{eqnarray*}
and
\begin{eqnarray*}
 \{ \mu_1,...,\mu_{2n-1} \} = 
 \{ \alpha_p p^{-n+3/2}, \alpha_p p^{-n+5/2},...,
 \alpha_p p^{n-3/2}, \beta_p^2 \}
\end{eqnarray*}
up to the action of the Weyl group.
\end{lemma}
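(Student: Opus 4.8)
The plan is to determine the Satake parameters in two independent steps: the multiset $\{\mu_1,\dots,\mu_{2n-1}\}$ is forced by the standard $L$-function of $\mathcal{F}_{f,g}$, while the remaining parameter $\mu_0$ is then forced by the single eigenvalue computed in Lemma~\ref{lem:Tp}. This is legitimate because, as recorded at the end of Section~\ref{sec:notation}, once $\mathcal{F}_{f,g}$ is known (from Section~\ref{ss:mi_lift}) to be a common eigenform for $\mathcal{H}(\Gamma_{2n-1},\mbox{Sp}_{2n-1}(\Qq))$ and (from Lemma~\ref{lem:Tp}) an eigenform for every $T_{2n-1}(p)$, its full Satake parameter $\{\mu_0,\dots,\mu_{2n-1}\}$ is determined up to the Weyl group $W_{2n-1}$.

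First I would read off $\{\mu_1,\dots,\mu_{2n-1}\}$ from the identity for $L(s,\mathcal{F}_{f,g},\mbox{st})$ quoted in Section~\ref{ss:mi_lift} in the case $r=1$, namely $L(s,g,\mbox{st})\prod_{i=1}^{2n-2}L(s+k+n-1-i,f)$. The $p$-Euler factor of the standard $L$-function of a degree-$(2n-1)$ eigenform is $\{(1-p^{-s})\prod_{i=1}^{2n-1}(1-\mu_i p^{-s})(1-\mu_i^{-1}p^{-s})\}^{-1}$, so I only need to expand the right-hand side in $\alpha_p,\beta_p$. The factor $L(s,g,\mbox{st})$ supplies the parameters $\beta_p^{\pm2}$ together with the $(1-p^{-s})$ term, while each $L(s+k+n-1-i,f)$ supplies $\alpha_p^{\pm1}p^{i-n+\frac12}$; thus the multiset $\{\mu_i^{\pm1}\}$ equals $\{\beta_p^{\pm2}\}\cup\{\alpha_p^{\pm1}p^{i-n+\frac12}:i=1,\dots,2n-2\}$. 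Since the exponents $i-n+\frac12$ run through the set $\{\pm\frac12,\pm\frac32,\dots,\pm(n-\frac32)\}$, which is symmetric about $0$, I can group these $4n-2$ numbers into inverse pairs $\{\mu_i,\mu_i^{-1}\}$, and one valid choice of representatives is exactly the claimed set $\{\alpha_p p^{-n+3/2},\dots,\alpha_p p^{n-3/2},\beta_p^2\}$; every other choice is $W_{2n-1}$-equivalent to it.

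Next I would pin down $\mu_0$. Recall that the eigenvalue of $T_{2n-1}(p)$ on an eigenform with Satake parameters $\{\mu_0,\dots,\mu_{2n-1}\}$ is $\mu_0\prod_{i=1}^{2n-1}(1+\mu_i)$ (the Satake image of $T_{2n-1}(p)$ is $x_0\prod_i(1+x_i)$); since $x_0(1+x_i)$ is fixed by each generator $\sigma_i$, this quantity is $W_{2n-1}$-invariant and equals the eigenvalue of Lemma~\ref{lem:Tp}. Substituting the representatives above, the elementary rewriting $1+\alpha_p p^{(2\ell-1)/2}=\alpha_p p^{(2\ell-1)/2}(1+\alpha_p^{-1}p^{-(2\ell-1)/2})$ together with $\sum_{\ell=1}^{n-1}(2\ell-1)=(n-1)^2$ turns $\prod_{i=1}^{2n-2}(1+\alpha_p p^{i-n+\frac12})$ into $\alpha_p^{n-1}p^{(n-1)^2/2}\prod_{\ell=1}^{n-1}(1+\alpha_p p^{(1-2\ell)/2})(1+\alpha_p^{-1}p^{(1-2\ell)/2})$, which is precisely the $\alpha$-product appearing in Lemma~\ref{lem:Tp}; the $\beta$-part collapses by $\beta_p^{-1}(1+\beta_p^2)p^{(k+n-1)/2}=\lambda_g(p)$. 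Equating the two expressions for the eigenvalue then solves for $\mu_0$ and yields the claimed value, provided the powers of $p$ agree, i.e. $(n-1)(k-\frac12)+\frac{(n-1)^2}{2}=(n-1)(k+n)-\frac{(n-1)(n+2)}{2}$; both sides equal $(n-1)(k+\frac{n}{2}-1)$.

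The routine but error-prone part is exactly this half-integer exponent bookkeeping, so I would finally cross-check the answer against the normalization $\mu_0^2\prod_{i=1}^{2n-1}\mu_i=p^{(2n-1)(k+n)-<2n-1>}$: with the claimed parameters and $\sum_{i=1}^{2n-2}(i-n+\frac12)=0$ the left-hand side is $p^{2(n-1)(k-\frac12)+(k+n-1)}=p^{(2n-1)k}$, while the right-hand side is $p^{(2n-1)(k+n)-(2n-1)n}=p^{(2n-1)k}$, confirming consistency. The only genuinely conceptual point, as opposed to computational, is the justification given in the first paragraph that the standard $L$-function together with the operators $T_{2n-1}(p)$ suffice to determine the complete Satake parameter up to $W_{2n-1}$.
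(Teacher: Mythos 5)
Your argument follows the same route as the paper: read off $\{\mu_1,\dots,\mu_{2n-1}\}$ from the standard $L$-function identity of Section~\ref{ss:mi_lift}, then solve for $\mu_0$ by equating $\mu_0\prod_{i=1}^{2n-1}(1+\mu_i)$ with the $T_{2n-1}(p)$-eigenvalue of Lemma~\ref{lem:Tp}, and your exponent bookkeeping is correct. However, there is a genuine gap: the step ``equating the two expressions for the eigenvalue then solves for $\mu_0$'' fails when $\beta_p^2=-1$, i.e.\ when $\lambda_g(p)=0$. In that case the factor $1+\mu_{2n-1}=1+\beta_p^2$ vanishes, so the left-hand side is $0$ independently of $\mu_0$, and the right-hand side is also $0$ because $\lambda_g(p)=\beta_p^{-1}p^{(k+n-1)/2}(1+\beta_p^2)=0$; the equation reads $0=0$ and determines nothing. (Note this degeneration can only come from the $\beta$-factor: since $|\alpha_p|=1$ and the exponents $i-n+\tfrac12$ are nonzero half-integers, no factor $1+\alpha_p p^{i-n+1/2}$ can vanish. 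But $\lambda_g(p)=0$ cannot be excluded for a general Hecke eigenform $g$, so the case must be treated.)

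The paper closes this gap with a short separate argument: if $\mu_{2n-1}=\beta_p^2=-1$, then the Weyl group element $\sigma_{2n-1}$ fixes $\mu_{2n-1}$ and sends $\mu_0$ to $\mu_0\mu_{2n-1}=-\mu_0$, so $\{\mu_0,\mu_1,\dots,\mu_{2n-2},-1\}$ and $\{-\mu_0,\mu_1,\dots,\mu_{2n-2},-1\}$ lie in the same Weyl orbit; hence $\mu_0$ only needs to be pinned down up to sign, which the normalization $\mu_0^2\prod_{i=1}^{2n-1}\mu_i=p^{(2n-1)(k+n)-<2n-1>}$ accomplishes. You actually verify exactly this identity at the end of your write-up, but only as a consistency check; promoting it to the determining equation in the degenerate case, together with the Weyl-orbit observation, is the missing idea. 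With that addition your proof coincides with the paper's.
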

\begin{proof}
We remark
\begin{eqnarray*}
 L(s,g,\mbox{st})
 &=&
 \zeta(s) \prod_p \left\{ (1-\beta_p^2 p^{-s}) (1 - \beta_p^{-2}p^{-s}) \right\}^{-1}.
\end{eqnarray*}

Hence, due to the identity of the standard $L$-function of $\mathcal{F}_{f,g}$ in Section~\ref{ss:mi_lift},
we can take
\begin{eqnarray*}
\{ \mu_1,...,\mu_{2n-1} \} = 
\{ \alpha_p p^{-n+3/2}, \alpha_p p^{-n+5/2},..., \alpha_p p^{n-3/2}, \beta_p^2 \}.
\end{eqnarray*} 

First we assume $\beta_p^2 \neq -1$.
It is known that the eigenvalue of $\mathcal{F}_{f,g}$ for $T_{2n-1}(p)$ is
$\mu_0 \displaystyle{\prod_{i=1}^{2n-1}(1+\mu_i)}$
(cf. \cite[p.257 Hilfsatz 3.14 (b)]{Fr}.) And $\lambda_g(p) = \beta_p^{-1}p^{\frac{k+n-1}{2}} (1 + \beta_p^2)$.
Thus, by virtue of Lemma~\ref{lem:Tp}, we obtain
$\mu_0 = \alpha_p^{-n+1} \beta_p^{-1} p^{(n-1)(k-\frac12) +(k+n-1)/2}$.

If $\beta_p^2 = -1$, then
without loss of generality we can set $\mu_{2n-1} = \beta_p^2 = -1$.
Because $\{\mu_0,\mu_1,...,\mu_{2n-2},-1 \}$ and $\{-\mu_0,\mu_1,...,\mu_{2n-2},-1 \}$
belong to the same equivalent class with respect to the action of the Weyl group,
and because of the identity
\begin{eqnarray*}
\mu_0^2
      = p^{(2n-1)(k+n)-<2n-1>} \mu_1^{-1} \cdots \mu_{2n-1}^{-1}
      = \alpha_p^{-2(n-1)} \beta_p^{-2} p^{(n-1)(2k-1)+k+n-1},
\end{eqnarray*}
we can take $\mu_0 = \alpha_p^{-n+1} \beta_p^{-1} p^{(n-1)(k-\frac12) +(k+n-1)/2}$.
\end{proof}

As a consequence of Lemma~\ref{lem:Tp} and Lemma~\ref{lem:Ffg_eigenvalue},
we obtain Theorem~\ref{thm:main}.


\subsection{Examples of spinor $L$-functions of Miyawaki-Ikeda lifts}\label{ss:example}
In this subsection we give some examples.
We assume $\mathcal{F}_{f,g} \not \equiv 0$ for each case.

\vspace{5mm}

\noindent
\textbf{Degree 3}.

If $n=2$, then $f \in S_{2k}(\Gamma_1)$, $g \in S_{k+2}(\Gamma_1)$ and $\mathcal{F}_{f,g} \in S_{k+2}(\Gamma_3)$. We have
\begin{eqnarray*}
 L(s,\mathcal{F}_{f,g},\mbox{spin})
 &=&
  L(s-k,g)\, L(s-k+1,g)\, L(s,g\otimes f).
\end{eqnarray*}
Here we set $L(s,g\otimes f) := L(s,g\otimes \mbox{sym}_1\, f)$.

We remark that it has been shown in~\cite{He} that
if the pullback of the Duke-Imamoglu-Ibukiyama-Ikeda lift $F_4\left( \smat{\tau}{0}{0}{\omega} \right)$ is not identically zero,
then there exists Hecke eigenforms $H \in S_{k+2}(\Gamma_3)$ and $g \in S_{k+2}(\Gamma_1)$
such that $L(s,H,\mbox{spin}) = L(s-k,g) L(s-k+1,g) L(s, g \otimes f )$,
where $f \in S_{2k}(\Gamma_1)$ is the preimage of the Duke-Imamoglu-Ibukiyama-Ikeda lift $F_4 \in S_{k+2}(\Gamma_4)$.

\vspace{5mm}

\noindent
\textbf{Degree 5}.

If $n=3$,
then $f \in S_{2k}(\Gamma_1)$, $g \in S_{k+3}(\Gamma_1)$ and
$\mathcal{F}_{f,g} \in S_{k+3}(\Gamma_5)$.
We have
\begin{eqnarray*}
 L(s,\mathcal{F}_{f,g},\mbox{spin})
 &=&
 L(s,g\otimes \mbox{sym}_2\, f)
 \prod_{i=-1}^{2} L(s-k+i, g\otimes f)
 \prod_{i=-1}^{3} L(s-2k+i, g).
\end{eqnarray*}

\vspace{5mm}

\noindent
\textbf{Degree 7}.

If $n=4$, then $f \in S_{2k}(\Gamma_1)$, $g \in S_{k+4}(\Gamma_1)$ and
$\mathcal{F}_{f,g} \in S_{k+4}(\Gamma_7)$.
We have
\begin{eqnarray*}
 L(s,\mathcal{F}_{f,g},\mbox{spin})
 &=&
 L(s,g\otimes \mbox{sym}_3\, f)
 \prod_{i=-2}^{3} L(s-k+i,g\otimes \mbox{sym}_2\, f) \\
 && \times \prod_{i=-3}^{5} L(s-2k+i,g\otimes f)^{\varepsilon_i}
                   \prod_{i=-3}^{6} L(s-3k+i,g)^{\varepsilon'_i},
\end{eqnarray*}
where
$\varepsilon_i = \begin{cases} 1 & \mbox{if } i = -3,-2,4,5, \\
                                                        2 & \mbox{if } i = -1,0,1,2,3
                              \end{cases}$
and
$\varepsilon'_i = \begin{cases} 1 & \mbox{if } i = -3,-2,-1,4,5,6, \\
                                                  2 & \mbox{if } i = 0,1,2,3.
                               \end{cases}$

\vspace{1cm}

\noindent
Department of Mathematics, Joetsu University of Education,\\
1 Yamayashikimachi, Joetsu, Niigata 943-8512, JAPAN\\
e-mail hayasida@juen.ac.jp

\end{document}